\newtheorem{theorem}{Theorem}[section]
\newtheorem{definition}{Definition}[section]
\newtheorem{example}{Example}[section]
\newtheorem{lemma}{Lemma}[section]
\newtheorem{corollary}{Corollary}[section]
\numberwithin{equation}{section}
\newcommand{\noi}{\noindent}
\begin{document}
\title[New Subclass of Pseudo-type Meromorphic Bi- Univalent functions...
  ]{New Subclass of Pseudo-type Meromorphic \\ Bi-Univalent functions of complex order\\ }\maketitle
\centerline{\bf {G. Murugusundaramoorthy$^{1,*}$ ,
    T. Janani,$^{2}$,and K. Vijaya$^3$   }}
\begin{center}$^{1,*}$  Corresponding Author\\School of Advanced Sciences,\\
VIT University, Vellore - 632 014,  India.\\
{\bf E-mail : gmsmoorthy@yahoo.com}\\
$^{2}$ School of Computer Science and Engineering\\
VIT University, Vellore - 632 014,  India.\\
{\bf E-mail: janani.t@vit.ac.in} \\
$^{3}$ School of Advanced Sciences,\\
VIT University, Vellore - 632 014,  India.\\
{\bf E-mail :kvijaya@vit.ac.in}\\
\end{center}
\begin{abstract}
In the present article, we define a new  subclass of pseudo-type meromorphic bi-univalent functions class $\Sigma'$ of complex order $\gamma \in \mathbb{C}\backslash \{0\}$ and  investigate the initial coefficient estimates $|b_0|, |b_1|$ and $|b_2|.$ Further we mention several new or known consequences of our result.\\
\\ \noi\textbf{AMS Subject Classification:} 30C45, 30C50.\\

\noi\textbf{Keywords:} Analytic functions; Univalent functions; Meromorphic functions;
Bi-univalent functions of complex order; Coefficient bounds, Pseudo functions.
\end{abstract}

\section{Introduction and Definitions}

Let $\mathcal{A}$ be class of analytic functions of the form
\begin{equation}\label{eq1}
f(z) = z + \sum_{n=2}^\infty a_nz^n
\end{equation}
which are univalent in the open unit disc $$\Delta = \{z :|z| < 1 \}.$$
Also, let $\mathcal{S}$ be class of all functions in $\mathcal{A}$ which are
univalent and normalized by the conditions $$f(0) = 0 = f'(0)-1$$in $\Delta.$ Some of the important and well-investigated subclasses of the
univalent function class $\mathcal{S}$ includes  the
class $\mathcal{S}^*(\alpha)(0\leq\alpha <1)$ of starlike functions of order
$\alpha$ in $\Delta$ and the class $\mathcal{K}(\alpha)(0\leq \alpha <1)$ of
convex functions of order $\alpha.$
%$$\Re\left(\frac{z\ f'(z)}{f(z)}\right)>\alpha \quad {\rm  or}\quad
%\Re\left(1 +  \frac{z\ f''(z)}{f'(z)}\right)> \alpha ,(z\in\Delta)$$ respectively.
\iffalse
Further a function $f(z)\in \mathcal{A}$ is said to be in the class $S({\gamma})$ of
univalent function of complex order $\gamma(\gamma\in \mathbb{C}\setminus\{0\})$ if and only if $$ \frac{f(z)}{z}\neq 0\,\,\, and   \,\,\, \Re\left(1+\frac{1}{\gamma}\left[\frac{zf'(z)}{f(z)}-1\right]\right)>0, z \in \Delta.
$$By taking $\gamma=(1-\alpha) cos\beta~ e^{-i\beta},\,\,
|\beta|<\frac{\pi}{2} \,\,\,$and $\,\, 0\leq\alpha < 1,$ the class $\mathcal{S}((1-\alpha) cos\beta~ e^{-i\beta})\equiv\mathcal{S}(\alpha,\beta)$ called the generalized class of $\beta$-spiral-like functions of order $\alpha(0\leq\alpha < 1).$
\fi
\par An analytic function $\varphi$ is subordinate to an analytic function $\psi,$ written  by $\varphi(z) \prec \psi(z),$
provided there is an analytic function $\omega$ defined on $\Delta$ with $$\omega(0) = 0\qquad {\rm  and}\qquad
|\omega(z)|< 1 $$ satisfying $$\varphi(z) = \psi(\omega(z)).$$ Ma and Minda \cite{ma} unified various subclasses
of starlike and convex functions for which either of the quantity $$\frac{z\ f'(z)}{f(z)}\quad {\rm  or}\quad
1 +  \frac{z\ f''(z)}{f'(z)}$$ is subordinate to a more general superordinate function.
For this purpose, they considered an analytic function $\phi$ with positive real part in
the unit disk $\Delta, \phi(0) = 1, \phi'(0) > 0$ and $\phi$ maps $\Delta$
onto a region starlike with respect to 1 and symmetric with respect to the real axis.
\par The class of Ma-Minda starlike functions consists of functions $f \in \mathcal{A}$ satisfying
the subordination $$\frac{z\ f'(z)}{f(z)} \prec\phi(z).$$ Similarly, the class of Ma-Minda
convex functions consists of functions $f \in \mathcal{A}$ satisfying the subordination
$$1 +  \frac{z\ f''(z)}{f'(z)} \prec \phi(z).$$
%XXXXXXXXXXXXXXXXXXXXXXXXXXXXXXXXXXXXXXXXXXXXXXXXXXXXXXXXXXXXXXXXXXXXXXXXXXXXXXXXXXXXXXXXXXXXXXXXXXXXXXXXXXXXXXXXXXXXXXXXXXXXXXXXXXXXXXXXXXXXXXXX
%\par A function $f(z) \in \mathcal{A}$ is in the class
% of strongly bi-starlike
%functions $\mathcal{S}_\Sigma^*(\alpha)$ (\cite{bt86,ta81})of order $\alpha$ $(0 < \alpha \leq 1)$ if each of the
%following conditions is satisfied:
%$$ \left| arg\left( \frac{zf^\prime(z)} {f(z)}
%\right) \right| < \frac{\alpha \pi} {2}, \,\,\,{\rm  and }\,\,\, \left| arg\left( \frac{wg^\prime(w)} {g(w)} \right) \right| <
%\frac{\alpha \pi} {2} $$
%where $g$ is the extension of $f^{-1}$ to $\Delta$.
%XXXXXXXXXXXXXXXXXXXXXXXXXXXXXXXXXXXXXXXXXXXXXXXXXXXXXXXXXXXXXXXXXXXXXXXXXXXXXXXXXXXXXXXXXXXXXXXXXXXXXXXXXXXXXXXXXXXXXXXXXXXXXXXXXXXXXXXXXXXXXXXXXXXXXXXXXXXX
 \par A function $f$ is bi-starlike of Ma-Minda
type or bi-convex of Ma-Minda type if both $f$ and $f^{-1}$ are respectively starlike
or convex. These classes are denoted respectively by $\mathcal{S}^{*}_{\Sigma}(\phi)$ and
$\mathcal{K}_{\Sigma}(\phi).$ In the sequel, it is assumed that $\phi$ is an analytic function with positive real part in
the unit disk $\Delta,$ satisfying $\phi(0) = 1, \phi'(0) > 0$ and $\phi(\Delta)$
is symmetric with respect to the real axis. Such a function has a series expansion of the form
\begin{equation}\label{c7e3}
\phi(z) = 1 + B_{1} z + B_{2} z^2 + B_{3} z^3 + \cdots,\ \ \ (B_{1}> 0).
\end{equation}
%======================================================================================================
By setting $\phi(z)$ as given below:
\begin{equation}\label{phi01}
\phi(z) = \left( \frac{1 + z}{1 - z} \right)^{\alpha} = 1 + 2 \alpha z + 2 \alpha^2 z^2 +\frac{4\alpha^2+2\alpha}{3}z^3+ \cdots\quad
(0 < \alpha \leq 1),
\end{equation}
we have $$B_{1} = 2 \alpha , ~~~ B_{2} = 2 \alpha^2\quad {\rm and }\quad B_3=\frac{4\alpha^2+2\alpha}{3}.$$
\par On the other hand, if we take
\begin{equation}\label{phi02}\phi(z) =  \frac{1 + (1 - 2 \beta) z}{1 - z}  = 1 + 2 (1 - \beta) z + 2 (1 - \beta) z^2 + \cdots\quad(0 \leq  \beta < 1),
\end{equation}then $$B_{1} = B_{2} = B_3= 2(1 - \beta).$$
%=================================================================================================================================================
\par Let $\Sigma'$ denote the class of meromorphic univalent functions $g$ of the form
\begin{equation}\label{mero1}g(z) = z + b_0+ \sum_{n=1}^\infty \frac{b_n}{z^n}\end{equation}
defined on the domain $\Delta^*= \{z : 1 < |z| < \infty\}.$ Since $g \in \Sigma' $ is univalent, it has an
inverse $g^{-1}=h$ that satisfy $$g^{-1}(g(z)) = z, ~(z \in \Delta^*)$$ and
$$g(g^{-1}(w)) = w ,(M <|w| < \infty, M >0) $$ where
%=====================================================================================================================================================
%Furthermore, the inverse function $g^{-1}$has a series expansion of the form
\begin{equation}\label{mero2}g^{-1}(w) = h(w) = w+\sum_{n=0}^\infty \frac{C_n}{w^n},\quad (M <|w| <\infty).\end{equation}
\par Analogous to the bi-univalent analytic functions, a function $g \in \Sigma'$ is said to be meromorphic
bi-univalent if $g^{-1} \in \Sigma'$. We denote the class of all meromorphic bi-univalent functions by $\mathcal{M}_{\Sigma'}.$
%A function is said to be bi-starlike if both the function and its inverse are starlike univalent.
Estimates on the coefficients of meromorphic univalent
functions were widely investigated in the literature, for example,
Schiffer\cite{schiffer}  obtained the estimate $|b_2| \leq \frac{2}{3}$ for meromorphic univalent functions $g \in \Sigma'$ with $b_0 = 0$
and Duren \cite{duren} gave an elementary proof of the inequality $|b_n|\leq \frac{2}{(n+1)}$ on the coefficient of meromorphic
univalent functions $g \in\Sigma'$ with $b_k = 0$ for $1 \leq k < \frac{n}{2}.$
For the coefficient of the inverse of meromorphic univalent
functions $h \in \mathcal{M}_{\Sigma'}$, Springer \cite{springer} proved that
$|C_3| \leq 1 ;$  $|C_3+\frac{1}{2}C^2_1|\leq\frac{1}{2}$ and conjectured that $|C_{2n-1}|\leq\frac{(2n-1)!}{n!(n-1)!},\,\,(n = 1,2, ...).$
\par In 1977, Kubota \cite{kubota} has proved that the Springer's conjecture is true for $ n = 3,4,5 $ and subsequently Schober
\cite{schober}  obtained a sharp bounds for the coefficients $C_{2n-1}, 1\leq n \leq 7$ of the inverse of meromorphic univalent
functions in $\Delta^*.$ Recently, Kapoor and Mishra \cite{kapoor} (see \cite{kund}) found the coefficient estimates for a class consisting of inverses
of meromorphic starlike univalent functions of order $\alpha $ in $\Delta^*.$
\par Recently, Babalola \cite{Babalola} defined a new subclass $\lambda-$pseudo starlike function of order $\beta~(0\leq \beta<1)$ satisfying the analytic condition
\begin{equation}\label{Babalola}
\Re \left( \frac{z (f'(z))^{\lambda}}{f(z)} \right) > \beta, \qquad (z \in \mathbb{U},
\lambda \geq 1 \in \mathbb{R})
\end{equation}
and denoted by $\mathcal{L}_\lambda(\beta).$
Babalola \cite{Babalola} remarked that though for $\lambda > 1,$ these classes of $\lambda-$pseudo starlike
functions clone the analytic representation of starlike functions. Also, when $\lambda = 1,$ we have the class of starlike functions of order $\beta$($1-$pseudo starlike functions of order $\beta$) and for $\lambda= 2,$ we have the class of functions, which is a product combination of geometric expressions for bounded turning and starlike
functions.
%=====================================================================================================================================================
\par Motivated by the earlier work of  \cite{DEN,jay1,kapoor,gmstjcho16,hms16,xu1}, in the present investigation, we define a new subclass of pseudo type meromorphic bi-univalent functions class $\Sigma'$ of complex order $\gamma \in \mathbb{C}\backslash \{0\}$, and the estimates
for the coefficients $|b_0|,|b_1|$ and $|b_2|$ are investigated.  Several new consequences of the results are also pointed out.\

\begin{definition}For $0 < \lambda \leq 1$ and $\mu \geq1,$
a function $g(z)\in\Sigma'$ given by (\ref{mero1}) is said to be in the class
$\mathcal{P}^\gamma_{\Sigma'}(\lambda,\mu,\phi)$ if the following conditions are
satisfied:
\begin{equation}\label{eq2}
1+\frac{1}{\gamma} \left[ (1-\lambda) \bigg(\frac{g(z)}{z}\bigg)^\mu + \lambda \left(  \frac{z (g'(z))^\mu}{g(z)} \right)
-1 \right] \prec \phi(z)
\end{equation}
and
\begin{equation}\label{eq3}
1+\frac{1}{\gamma} \left[ (1-\lambda) \bigg(\frac{h(w)}{w}\bigg)^\mu + \lambda \left(  \frac{w (h'(w))^\mu}{h(w)} \right)
-1 \right] \prec \phi(w)
\end{equation}
where $z,w \in \Delta^*, \, \gamma \in \mathbb{C}\backslash \{0\}$  and the function $h$ is given by (\ref{mero2}).
%\begin{equation}\labelg(w) = w - a_2w^2 + (2a_2^2 - a_3)w^3 - (5a_2^3 - 5a_2a_3 + a_4)w^4 +\cdots\end{equation}
\end{definition}
\par By suitably specializing the parameter $\lambda,$ we state new subclass of meromorphic pseudo bi-univalent functions of complex order $\mathcal{P}^\gamma_{\Sigma'}(\lambda,\mu, \phi)$ as illustrated in the following Examples.
\begin{example}\label{exam1}For $\lambda=1,$ a function $g\in\Sigma'$ given by (\ref{mero1}) is said to be in the class  $\mathcal{P}^\gamma_{\Sigma'}(1,\mu,\phi)\equiv \mathcal{P}^\gamma_{\Sigma'}(\mu,\phi)$  if it satisfies the following conditions:
\begin{equation*}
1+\frac{1}{\gamma} \left( \frac{z (g'(z))^\mu}{g(z)}
-1 \right) \prec \phi(z)
\quad {\rm
and}\quad
1+\frac{1}{\gamma} \left(\frac{w (h'(w))^\mu}{h(w)}-1 \right) \prec \phi(w) \ \
\end{equation*}
where $z,w \in \Delta^*, \mu \geq 1, \, \gamma \in \mathbb{C}\backslash \{0\}$  and the function $h$ is given by (\ref{mero2}).
\end{example}
\begin{example}\label{exam2}For $\lambda=1$ and $\gamma=1,$ a function $g\in\Sigma'$ given by (\ref{mero1}) is said to be in the class  $\mathcal{P}^1_{\Sigma'}(1,\mu,\phi)\equiv \mathcal{P}_{\Sigma'}(\mu,\phi)$  if it satisfies the following conditions :
\begin{equation*}
\frac{z (g'(z))^\mu}{g(z)} \prec \phi(z)\quad {\rm
and}\quad
\frac{w (h'(w))^\mu}{h(w)}\prec \phi(w) \ \
\end{equation*}
where $z,w \in \Delta^*, \mu \geq 1$  and the function $h$ is given by (\ref{mero2}).
\end{example}

%========================================================================================================================================
\section{Coefficient estimates for new function class   $\mathcal{P}^\gamma_{\Sigma'}(\lambda,\mu,\phi)$}
%=========================================================================================================================================
 \par In this section, we obtain the coefficient estimates $|b_0|,$ $|b_1|$ and $|b_2|$ for $\mathcal{P}^\gamma_{\Sigma'}( \lambda,\mu,\phi),$ a new  subclass of meromorphic pseudo bi-univalent functions class $\Sigma'$ of complex order $\gamma \in \mathbb{C}\backslash \{0\}.$
 In order to prove our result, we recall the  following lemma.

\begin{lemma}\label{lem1.1}\cite{Pom}
If $\Phi \in \mathcal{P}$, the class of all functions with $\Re \left(\Phi(z) \right)> 0 ,(z\in \Delta)$ then $$|c_k| \leq 2,\,\, {\rm for~~ each}\,\, k,$$
where
%$\mathcal{P}$ is the family of all functions $\Phi$ analytic in $\Delta$ for which $\Re(\Phi(z)) >0$ and
\[\Phi(z) = 1 + c_1z + c_2z^2 + \cdots \ \ \text{for} \ z \in \Delta.\]
\end{lemma}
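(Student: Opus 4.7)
The plan is to derive the bound by writing the Taylor coefficients of $\Phi$ as an integral that involves only $\Re(\Phi)$, and then to exploit positivity together with the mean value property. First I would apply the Cauchy integral formula on the circle $|z|=r$ for an arbitrary $0 < r < 1$, which produces the representation
\[
c_k \, r^k \;=\; \frac{1}{2\pi}\int_0^{2\pi} \Phi(re^{i\theta})\, e^{-ik\theta}\, d\theta.
\]
Next, I would observe that for $k \geq 1$ the integral of $\overline{\Phi(re^{i\theta})}$ against $e^{-ik\theta}$ vanishes: expanding $\overline{\Phi(re^{i\theta})} = \sum_{n\geq 0}\bar c_n r^n e^{-in\theta}$ gives only non-positive Fourier modes, so the orthogonality relation $\int_0^{2\pi} e^{-i(k+n)\theta}\,d\theta = 0$ (for all $n\geq 0$ when $k\geq 1$) kills every term. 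Adding this vanishing quantity to the previous identity converts $\Phi$ into $2\Re(\Phi)$ under the integral:
\[
c_k \, r^k \;=\; \frac{1}{\pi}\int_0^{2\pi} \Re\bigl(\Phi(re^{i\theta})\bigr)\, e^{-ik\theta}\, d\theta.
\]

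With this representation in hand, the bound is immediate. Since $\Re(\Phi) > 0$, the triangle inequality gives
\[
|c_k|\, r^k \;\leq\; \frac{1}{\pi}\int_0^{2\pi} \Re\bigl(\Phi(re^{i\theta})\bigr)\, d\theta \;=\; 2\,\Re(\Phi(0)) \;=\; 2,
\]
using the harmonic mean value property of $\Re(\Phi)$ and the normalization $\Phi(0) = 1$. Letting $r \to 1^{-}$ yields $|c_k| \leq 2$ for every $k \geq 1$.

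There is no serious obstacle here; the one technical point to be careful with is the cancellation of the $\overline{\Phi}$ contribution, which is what makes $\Re(\Phi)$ appear in the integrand. An alternative route, worth mentioning, is to introduce the Schwarz function $\omega(z) = (\Phi(z) - 1)/(\Phi(z) + 1)$ with $\omega(0) = 0$ and $|\omega(z)| < 1$, and then apply the Schwarz lemma style coefficient estimates to $\omega$; this approach recovers $|c_1|\leq 2$ directly from $|\omega'(0)| \leq 1$, and the higher coefficients follow via Carathéodory–Toeplitz positivity. I prefer the integral approach above, however, because it treats all $k \geq 1$ uniformly in a single line of estimation.
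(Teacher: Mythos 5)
Your proof is correct, and there is nothing in the paper to compare it against: the paper states this lemma without proof, citing Pommerenke's book, since it is the classical Carath\'eodory coefficient lemma. Your argument is exactly the standard one — the Cauchy/Fourier representation $c_k r^k = \frac{1}{2\pi}\int_0^{2\pi}\Phi(re^{i\theta})e^{-ik\theta}\,d\theta$, the vanishing of the conjugate series against $e^{-ik\theta}$ for $k\ge 1$ (legitimate because the power series converges uniformly on $|z|=r<1$, so term-by-term integration is allowed), positivity of $\Re\Phi$ to drop the modulus, the mean value property giving $2\,\Re\Phi(0)=2$, and the limit $r\to 1^-$ — and every step checks out, with the normalization $\Phi(0)=1$ supplied by the stated form $\Phi(z)=1+c_1z+\cdots$.
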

\par Define the functions $p$ and $q$ in $\mathcal{P}$ given by
\begin{equation*}
p(z) = \frac{1 + u(z)}{1 - u(z)} = 1 + \frac{p_{1}}{z} + \frac{p_{2}}{z^2}+ \cdots
\end{equation*}
and
\begin{equation*}
q(z) = \frac{1 + v(z)}{1 - v(z)} = 1 + \frac{q_{1}}{z}  + \frac{q_{2}}{z^2}  + \cdots.
\end{equation*}
It follows that
\begin{equation*}
u(z) = \frac{p(z) - 1}{p(z) + 1} = \frac{1}{2}\left[ \frac{p_{1}}{z}+ \left(p_{2} - \frac{p_{1}^2}{2}\right) \frac{1}{z^2} + \cdots \right]
\end{equation*}
and
\begin{equation*}
v(z) = \frac{q(z) - 1}{q(z) + 1} = \frac{1}{2}\left[\frac{q_{1}}{z}  + \left(q_{2} - \frac{q_{1}^2}{2}\right) \frac{1}{z^2} +\cdots \right].
\end{equation*}
Note that for the functions $p(z),q(z) \in \mathcal{P},$  we have  \[|p_{i}| \leq 2 \,\,\,{\rm and}\,\,\, |q_{i}| \leq 2 \,\,\,\, {\rm for~~ each }\,\,i.\]

\begin{theorem}\label{th1}
Let $g$ be given by (\ref{mero1}) in the class $\mathcal{P}^\gamma_{\Sigma'}( \lambda,\mu,\phi).$
Then
\begin{equation}\label{eq7}
|b_0| \leq \frac{|\gamma ||B_1| }{|\mu-\mu\lambda-\lambda|},
\end{equation}
\begin{equation}\label{b1e}\small{
|b_1| \leq ~\frac{|\gamma|}{2|\mu-\lambda-2\mu\lambda|} \sqrt{4|(B_1-B_2)^2|+4|B^2_1|+8|B_1(B_1-B_2)|+\frac{|\mu(\mu-1)(1-\lambda)+2\lambda|^2|\gamma^2 B^4_1|}{|\mu-\mu\lambda-\lambda|^4}}}
\end{equation}
and
\begin{equation}\label{b2b}\small{
|b_2|\leq\frac{|\gamma|}{2|\mu-\lambda-3\mu\lambda|}\left(2|B_1|
+4|B_2-B_1|+2|B_1-2B_2+B_3|+\frac{|\mu(\mu-1)(\mu-2)(1-\lambda)-6\lambda||\gamma|^2|B_1|^3}{3|\lambda|^3}\right)}
\end{equation}
where $\gamma \in \mathbb{C}\backslash \{0\},0 < \lambda \leq 1, \mu \geq 1 $ and $\,\,z,w \in \Delta^*.$

\end{theorem}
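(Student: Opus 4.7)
The plan is to follow the Ma--Minda-type coefficient-matching programme, specialised to the meromorphic bi-univalent class. By the subordinations \eqref{eq2} and \eqref{eq3} there exist Schwarz functions $u,v$ on $\Delta^*$ with $u(\infty)=v(\infty)=0$ such that the LHS of \eqref{eq2} equals $\phi(u(z))$ and the LHS of \eqref{eq3} equals $\phi(v(w))$. Converting to Carath\'eodory data via the functions $p,q$ defined immediately before the theorem, $u$, $v$, and hence $\phi(u)$, $\phi(v)$, admit explicit Laurent expansions in $1/z$, $1/w$; for instance the $1/z$, $1/z^2$, $1/z^3$ coefficients of $\phi(u(z))$ are $\tfrac12 B_1 p_1$, $\tfrac12 B_1 p_2+\tfrac14(B_2-B_1)p_1^2$, and $\tfrac12 B_1 p_3+\tfrac12(B_2-B_1)p_1 p_2+\tfrac18(B_1-2B_2+B_3)p_1^3$, with the same formulas in the $q_i$ for $\phi(v(w))$.

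Next I will Laurent-expand the LHS of \eqref{eq2}. Writing $g(z)/z=1+b_0/z+b_1/z^2+b_2/z^3+\cdots$ and using the binomial series for $(1+x)^\mu$, I obtain $(g/z)^\mu$ through order $1/z^3$. Similarly, $(g'(z))^\mu=1-\mu b_1/z^2-2\mu b_2/z^3+\cdots$ via the binomial series applied to $g'(z)=1-b_1/z^2-2b_2/z^3-\cdots$, and multiplying by $z$ and by
\[
\frac{1}{g(z)} \;=\; \frac{1}{z}-\frac{b_0}{z^2}+\frac{b_0^2-b_1}{z^3}+\frac{-b_0^3+2b_0b_1-b_2}{z^4}+\cdots
\]
yields $z(g'(z))^\mu/g(z)$ to the same order. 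Combining with weights $(1-\lambda)$ and $\lambda$, the coefficient of $1/z^k$ for $k=1,2,3$ has the clean form $\alpha_k b_{k-1}+(\text{polynomial in lower }b_j)$ where $\alpha_k=\mu-k\mu\lambda-\lambda$; these are exactly the denominators appearing in \eqref{eq7}--\eqref{b2b}. Dividing by $\gamma$ and equating to the corresponding Laurent coefficient of $\phi(u(z))$ yields three equations in $b_0,b_1,b_2$ and $p_1,p_2,p_3$. The parallel computation for \eqref{eq3} produces three companion equations in $C_0,C_1,C_2$ and $q_1,q_2,q_3$, and the inverse-function identities $C_0=-b_0$, $C_1=-b_1$, $C_2=-b_0b_1-b_2$ (obtained from $g\circ h=\mathrm{id}$) convert them into equations purely in $b_0,b_1,b_2$.

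The three estimates are then read off in order. From the pair of first-order relations one has $(\mu-\mu\lambda-\lambda)b_0=\tfrac12\gamma B_1 p_1=-\tfrac12\gamma B_1 q_1$; Lemma~\ref{lem1.1} and $|p_1|\le 2$ give \eqref{eq7} at once, and the useful identity $p_1=-q_1$ falls out for free. For \eqref{b1e}, I will solve the order-$1/z^2$ equation for $b_1$, substitute $b_0^2=\gamma^2 B_1^2 p_1^2/[4(\mu-\mu\lambda-\lambda)^2]$ from the squared first-order relation, and bound each resulting term using $|p_i|\le 2$; after grouping the Schwarz-coefficient contribution $B_1 p_2+\tfrac12(B_2-B_1)p_1^2$ and the $b_0^2$-induced contribution under a common radical, the stated form emerges. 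For \eqref{b2b}, I will work with the pair of order-$1/z^3$, $1/w^3$ equations: subtracting them isolates $b_2$ (with the mixed $b_0b_1$ piece controlled via the already-bounded $b_0$), substituting $b_0^3$ from the first-order relation produces the final $|B_1|^3$ summand, and the triangle inequality together with $|p_i|,|q_i|\le 2$ finishes the estimate.

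The main obstacle is the bookkeeping in Step~2. Expanding $z(g'(z))^\mu/g(z)$ to order $1/z^3$ requires multiplying two Laurent series and carefully collecting every cross-term, and one must verify that the coefficients of $b_0,b_1,b_2$ collapse to $\mu-\mu\lambda-\lambda$, $\mu-2\mu\lambda-\lambda$, $\mu-3\mu\lambda-\lambda$, and that the accompanying polynomial pieces agree with the structural constants $\tfrac12[(1-\lambda)\mu(\mu-1)+2\lambda]$ and $\tfrac16[(1-\lambda)\mu(\mu-1)(\mu-2)-6\lambda]$ that appear in \eqref{b1e} and \eqref{b2b}. Once these identities are in place, the three estimates follow by direct algebraic manipulation combined with Lemma~\ref{lem1.1}.
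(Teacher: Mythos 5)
Your setup (subordination $\Rightarrow$ Schwarz functions $\Rightarrow$ Carath\'eodory functions $p,q$, Laurent expansion of both sides, the coefficient pattern $\mu-k\mu\lambda-\lambda$, the inverse identities $C_0=-b_0$, $C_1=-b_1$, $C_2=-b_0b_1-b_2$, and Lemma~\ref{lem1.1}) matches the paper and gives \eqref{eq7} correctly. The genuine gap is in the $|b_1|$ step. You propose to solve the single order-$1/z^2$ relation (the analogue of \eqref{eq14}) for $b_1$, substitute $b_0^2=\gamma^2B_1^2p_1^2/[4(\mu-\mu\lambda-\lambda)^2]$, and bound termwise. That route only yields
\[
|b_1|\;\le\;\frac{|\gamma|}{2|\mu-\lambda-2\mu\lambda|}\left(2|B_1|+2|B_1-B_2|+\frac{|\mu(\mu-1)(1-\lambda)+2\lambda|\,|\gamma|\,B_1^2}{|\mu-\mu\lambda-\lambda|^{2}}\right),
\]
i.e.\ a bound of the form $\frac{|\gamma|}{2|C|}(u+v)$, whereas \eqref{b1e} asserts the strictly smaller $\frac{|\gamma|}{2|C|}\sqrt{u^2+v^2}$ (note $4|(B_1-B_2)^2|+4|B_1^2|+8|B_1(B_1-B_2)|=(2|B_1|+2|B_1-B_2|)^2$). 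Your final remark that the two contributions can be grouped ``under a common radical'' is exactly the invalid step: $u+v\le\sqrt{u^2+v^2}$ is false. To get \eqref{b1e} you must use \emph{both} second-order relations \eqref{eq14} and \eqref{eq16}: square each and add them (using $p_1=-q_1$, hence $q_1^2=p_1^2$), which produces the closed identity \eqref{eq19} for a multiple of $b_1^2/\gamma^2$ plus a $b_0^4/\gamma^2$ term; one then moves the $b_0^4$ term to the right, bounds it via \eqref{eqbo}, and only afterwards takes the square root. This square-and-add step is the heart of the paper's argument for \eqref{b1e} and is missing from your plan.

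A related, smaller issue concerns $|b_2|$. After subtracting the two third-order relations you say the mixed $b_0b_1$ piece is ``controlled via the already-bounded $b_0$''; but bounding $|b_0b_1|$ by the product of the separate estimates would import the square-root bound for $b_1$ and cannot produce the clean form \eqref{b2b}. The paper instead \emph{adds} the two third-order relations to obtain the exact expression \eqref{b0b1} for $b_0b_1/\gamma$ in terms of $p_3,q_3,p_1,p_2,q_2$, substitutes it into the subtracted relation, and only then applies Lemma~\ref{lem1.1}. Finally, be aware that an honest substitution of $b_0^3$ from \eqref{eq13} gives the last summand with $|\mu-\mu\lambda-\lambda|^3$ in the denominator rather than the $3|\lambda|^3$ appearing in \eqref{b2b} (the two coincide when $\lambda=1$ but not in general); this is a discrepancy inside the stated theorem itself, and your outline as written would not reproduce the printed constant either.
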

\begin{proof}
It follows from (\ref{eq2}) and (\ref{eq3}) that
\begin{equation}\label{eq9}
1+\frac{1}{\gamma} \left[ (1-\lambda) \bigg(\frac{g(z)}{z}\bigg)^\mu + \lambda \left(  \frac{z (g'(z))^\mu}{g(z)} \right)
-1 \right]= \phi(u(z))
\end{equation}
and
\begin{equation}\label{eq10}
1+\frac{1}{\gamma} \left[ (1-\lambda) \bigg(\frac{h(w)}{w}\bigg)^\mu + \lambda \left(  \frac{w (h'(w))^\mu}{h(w)} \right)
-1 \right] = \phi(v(w)).
\end{equation}
In light of (\ref{mero1}), (\ref{mero2}), (\ref{eq2}) and (\ref{eq3}), we have
\begin{multline}\label{eq9d}
1+\frac{1}{\gamma} \left[ (1-\lambda) \bigg(\frac{g(z)}{z}\bigg)^\mu + \lambda \left(  \frac{z (g'(z))^\mu}{g(z)} \right)
-1 \right] \\=  1+B_{1} p_{1}\frac{1}{2z}+\left[\frac{1}{2} B_{1} \bigg(p_{2} - \frac{p_{1}^2}{2}\bigg) + \frac{1}{4} B_{2}p_{1}^2\right]\frac{1}{z^2}\qquad \qquad\qquad\qquad\qquad\\
+\left[\frac{B_1}{2}\left(p_3-p_1p_2+\frac{p_1^3}{4}\right)+\frac{B_2}{2}\left(p_1p_2-\frac{p_1^3}{2}\right)+B_3\frac{p_1^3}{8}\right]\frac{1}{z^3}
...\qquad{}\qquad{}
\end{multline} and
\begin{multline}\label{eq10d}
1+\frac{1}{\gamma} \left[ (1-\lambda) \bigg(\frac{h(w)}{w}\bigg)^\mu + \lambda \left(  \frac{w (h'(w))^\mu}{h(w)} \right)
-1 \right] \\=  1+B_{1} q_{1}\frac{1}{2w}+\left[\frac{1}{2} B_{1} \bigg(q_{2} - \frac{q_{1}^2}{2}\bigg) + \frac{1}{4} B_{2} q_{1}^2\right]\frac{1}{w^2}\qquad \qquad\qquad\qquad\qquad
\\+\left[\frac{B_1}{2}\left(q_3-q_1q_2+\frac{q_1^3}{4}\right)+\frac{B_2}{2}\left(q_1q_2-\frac{q_1^3}{2}\right)+B_3\frac{q_1^3}{8}\right]\frac{1}{w^3}...\qquad{}\qquad{}
\end{multline}
Now, equating the coefficients in (\ref{eq9d}) and (\ref{eq10d}), we get
\begin{equation}\label{eq13}
\frac{(\mu-\mu\lambda-\lambda)}{\gamma} b_0 = \frac{1}{2} B_{1} p_{1},
\end{equation}
\begin{equation}\label{eq14}
\frac{1}{2\gamma}\bigg[\big(\mu(\mu-1)(1-\lambda)+2\lambda\big)b_0^2+2(\mu-\lambda-2\lambda\mu) b_1\bigg]= \frac{1}{2} B_{1} \bigg(p_{2} - \frac{p_{1}^2}{2}\bigg) + \frac{1}{4} B_{2} p_{1}^2,
\end{equation}
\begin{multline}\label{eqb2}
\frac{1}{6\gamma}\bigg[\big(\mu(\mu-1)(\mu-2)(1-\lambda)-6\lambda\big) b_0^3+6\big(\mu(\mu-1)(1-\lambda)+2\lambda+\lambda\mu\big) b_0 b_1+6(\mu-\lambda-3\lambda\mu)b_2\bigg] \\ = \left[\frac{B_1}{2}\left(p_3-p_1p_2+\frac{p_1^3}{4}\right)+\frac{B_2}{2}\left(p_1p_2-\frac{p_1^3}{2}\right)+B_3\frac{p_1^3}{8}\right], \qquad \qquad \qquad
\end{multline}
\begin{equation}\label{eq15}
\frac{-(\mu-\mu\lambda-\lambda)}{\gamma} b_0 = \frac{1}{2} B_{1} q_{1},
\end{equation}
\begin{equation}\label{eq16}
\frac{1}{2\gamma}\bigg[\big(\mu(\mu-1)(1-\lambda)+2\lambda\big)b_0^2+2(\lambda-\mu+2\lambda\mu) b_1\bigg]= \frac{1}{2} B_{1} \bigg(q_{2} - \frac{q_{1}^2}{2}\bigg) + \frac{1}{4} B_{2} q_{1}^2
\end{equation}
and
\begin{multline}\label{eqb2a}
\frac{1}{6\gamma}\bigg[\big(6\lambda-\mu(\mu-1)(\mu-2)(1-\lambda)\big) b_0^3+6\big(\mu(\mu-1)(1-\lambda)-\mu(1-\lambda)+3\lambda+3\lambda\mu\big) b_0 b_1+6(\lambda-\mu+3\lambda\mu)b_2\bigg] \\ = \left[\frac{B_1}{2}\left(q_3-q_1q_2+\frac{q_1^3}{4}\right)+\frac{B_2}{2}\left(q_1q_2-\frac{q_1^3}{2}\right)+B_3\frac{q_1^3}{8}\right]\qquad \qquad \qquad
\end{multline}
From (\ref{eq13}) and (\ref{eq15}), we get
\begin{equation}\label{eq17}
p_1 = -q_1
\end{equation}
and
\begin{equation}\label{eqbo}
b_0^2 = \frac{\gamma^2 B_1^2}{8(\mu-\mu\lambda-\lambda)^2} (p_1^2 + q_1^2).\end{equation}
Applying Lemma (\ref{lem1.1}) for the coefficients $p_1$ and $q_1$, we
have
\[|b_0| \leq \frac{|\gamma ||B_1| }{|\mu-\mu\lambda-\lambda|}.\]

\par Next, in order to find the bound on $|b_1|$ from (\ref{eq14}), (\ref{eq16}) (\ref{eq17}) and(\ref{eqbo}), we obtain
\begin{multline}\label{eq19}
2(\mu-\lambda-2\lambda\mu)^2~\frac{b_1^2}{\gamma^2} + [\mu(\mu-1)(1-\lambda)+2\lambda]^2 \frac{b_0^4}{2\gamma^2}\\ = (B_1-B_2)^2~\frac{p^4_1}{8}+\frac{B_1^2}{4} (p^2_2+ q^2_2)+ B_1(B_2-B_1)\frac{(p_1^2p_2+q_1^2q_2)}{4}.\qquad{}\qquad{}
\end{multline}
Using (\ref{eqbo}) and applying Lemma (\ref{lem1.1}) once again for the coefficients $p_1, p_2$ and $q_2$, we get
\begin{multline*}|b_1|^2 \leq \frac{|\gamma^2|}{4|\mu-\lambda-2\lambda\mu|^2}\times\\
\left(4|(B_1-B_2)^2|+4|B_1|^2+8|B_1(B_1-B_2)|+\frac{|\mu(\mu-1)(1-\lambda)+2\lambda|^2|\gamma^2 B^4_1|}{|\mu-\mu\lambda-\lambda|^4}\right).\end{multline*}
That is,
\begin{multline*}|b_1| \leq ~\frac{|\gamma|}{2|\mu-\lambda-2\lambda\mu|} \times \\ \sqrt{4|(B_1-B_2)^2|+4|B_1|^2+8|B_1(B_1-B_2)|+\frac{|\mu(\mu-1)(1-\lambda)+2\lambda|^2|\gamma^2 B^4_1|}{|\mu-\mu\lambda-\lambda|^4}}.\end{multline*}

\par In order to find the estimate $|b_2|,$ consider the sum of (\ref{eqb2}) and (\ref{eqb2a}) with $p_1=-q_1,$ we have
\begin{equation}\label{b0b1}
\frac{1}{\gamma} b_0 b_1 =\frac{B_1[p_3+q_3]+(B_2-B_1)p_1[p_2-q_2]}{2[2\mu(\mu-1)(1-\lambda)-(1-\lambda)\mu+5\lambda+4\lambda\mu]}.
\end{equation}
Subtracting (\ref{eqb2a}) from (\ref{eqb2}) and using $p_1=-q_1$ we have
\begin{multline}\label{b2}
2(\mu-\lambda-3\lambda\mu)\frac{b_2}{\gamma}\\=-(\mu-\lambda-3\mu\lambda) \frac{b_0b_1}{\gamma} -[\mu(\mu-1)(\mu-2)(1-\lambda)-6\lambda] \frac{b_0^3}{3\gamma}+\frac{B_1}{2}(p_3-q_3)\\
+\frac{B_2-B_1}{2}(p_2+q_2)p_1+\frac{B_1-2B_2+B_3}{4}p_1^3.
\end{multline}
Substituting for $\frac{b_0b_1}{\gamma}$ and $\frac{b_0^3}{\gamma}$ in (\ref{b2}), simple computation yields,
\small{\begin{eqnarray}\label{b2a}
\frac{b_2}{\gamma}&=&\frac{-B_1}{2(\mu-\lambda-3\lambda\mu)}\left(\frac{\mu-3\lambda-4\lambda\mu-\mu(\mu-1)(1-\lambda)}{2\mu(\mu-1)(1-\lambda)-\mu+5\lambda+5\lambda\mu}p_3+\frac{2\lambda+\lambda\mu+\mu(\mu-1)(1-\lambda)}{2\mu(\mu-1)(1-\lambda)-\mu+5\lambda+5\lambda\mu}q_3\right)\notag\\
&-&\frac{(B_2-B_1)p_1}{2(\mu-\lambda-3\lambda\mu)}\left(\frac{\mu-3\lambda-4\lambda\mu-\mu(\mu-1)(1-\lambda)}{2\mu(\mu-1)(1-\lambda)-\mu+5\lambda+5\lambda\mu}p_2-\frac{2\lambda+\lambda\mu+\mu(\mu-1)(1-\lambda)}{2\mu(\mu-1)(1-\lambda)-\mu+5\lambda+5\lambda\mu}q_2\right)\notag\\&+&
\frac{B_1-2B_2+B_3}{8(\mu-\lambda-3\lambda\mu)}p_1^3-\frac{(\mu(\mu-1)(\mu-2)(1-\lambda)-6\lambda)\gamma^2~B_1^3}{48(\mu-\lambda-3\lambda\mu)\lambda^3}p_1^3.
\end{eqnarray}}Applying Lemma \ref{lem1.1} in the above equation yields,
\begin{multline}
|b_2|\leq \frac{|\gamma|}{2|\mu-\lambda-3\lambda\mu|} \times \\ \left(2|B_1|
+4|B_2-B_1|+2|B_1-2B_2+B_3|+\frac{|\mu(\mu-1)(\mu-2)(1-\lambda)-6\lambda||\gamma|^2|B_1|^3}{3|\lambda|^3}\right)
\end{multline}
\end{proof}
%============================================================================
\par By taking $\lambda =1,$ we  state the following.
\begin{theorem}\label{th6}
Let $g$ be given by (\ref{mero1}) in the class  $\mathcal{P}^\gamma_{\Sigma'}(\mu,\phi).$
Then
\begin{equation}\label{eq7e}
|b_0|\leq|\gamma|~|B_1|,
\end{equation}

\begin{equation}\label{eq8e}
|b_1| \leq ~\frac{|\gamma|}{|1+\mu|} \sqrt{|(B_1-B_2)^2|+|B^2_1|+2|B_1(B_1-B_2)|+|\gamma|^2~|B^4_1|}.
\end{equation}and
\begin{equation}\label{eq8f}|b_2| \leq \frac{|\gamma|}{|1+2\mu|}\left(|B_1|
+2|B_2-B_1|+|B_1-2B_2+B_3|+|\gamma|^2~|B_1|^3\right)\end{equation}
where $\gamma \in \mathbb{C}\backslash \{0\}, \mu \geq 1$ and $\,\,z,w \in \Delta^*.$
\end{theorem}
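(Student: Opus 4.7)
The plan is to derive Theorem \ref{th6} as an immediate corollary of Theorem \ref{th1} by specializing $\lambda=1$. By Example \ref{exam1}, the class $\mathcal{P}^\gamma_{\Sigma'}(\mu,\phi)$ coincides with $\mathcal{P}^\gamma_{\Sigma'}(1,\mu,\phi)$, so any $g$ in it automatically satisfies the bounds (\ref{eq7}), (\ref{b1e}) and (\ref{b2b}) with $\lambda=1$. The entire task therefore reduces to three arithmetic simplifications, the key observation being that each factor of $(1-\lambda)$ annihilates an entire polynomial-in-$\mu$ contribution.

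For $|b_0|$, I would substitute $\lambda=1$ into the denominator $|\mu-\mu\lambda-\lambda|$, which collapses to $1$, giving (\ref{eq7e}) at once. For $|b_1|$, the prefactor $\frac{|\gamma|}{2|\mu-\lambda-2\mu\lambda|}$ reduces to $\frac{|\gamma|}{2|\mu+1|}$, while the numerator $|\mu(\mu-1)(1-\lambda)+2\lambda|^2$ under the root simplifies to $4$ and the denominator $|\mu-\mu\lambda-\lambda|^4$ to $1$, so the radicand in (\ref{b1e}) factors as $4$ times the expression in (\ref{eq8e}). Pulling the resulting factor $2$ out of the square root cancels the $2$ in the prefactor and delivers (\ref{eq8e}).

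For $|b_2|$, the same substitution sends $|\mu-\lambda-3\mu\lambda|$ to $|2\mu+1|$ and reduces $|\mu(\mu-1)(\mu-2)(1-\lambda)-6\lambda|$ to $6$ with $3|\lambda|^3=3$, so the final term inside the parentheses of (\ref{b2b}) becomes $2|\gamma|^2|B_1|^3$. Factoring $2$ out of the whole parenthesized expression and cancelling with the $\frac{1}{2}$ in the prefactor yields (\ref{eq8f}). There is no conceptual obstacle here since the proof is a pure bookkeeping exercise once Theorem \ref{th1} is available; the only care needed is to track signs inside the absolute values, noting that $\mu-1-2\mu=-(\mu+1)$ and $\mu-1-3\mu=-(2\mu+1)$, so that $|\mu-\lambda-2\mu\lambda|=\mu+1$ and $|\mu-\lambda-3\mu\lambda|=2\mu+1$ under the running hypothesis $\mu\geq 1$.
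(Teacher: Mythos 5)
Your specialization $\lambda=1$ in Theorem \ref{th1} is exactly how the paper obtains Theorem \ref{th6} (the paper simply states ``By taking $\lambda=1$'' before it), and your arithmetic — the denominators collapsing to $1$, $\mu+1$, $2\mu+1$, the factors $4$ and $6/3=2$ cancelling against the prefactors — checks out. The proposal is correct and takes the same route as the paper.
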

\par By taking $\lambda =1$ and $\gamma=1,$ we  state the following results.
\begin{theorem}\label{th6a}
Let $g$ be given by (\ref{mero1}) in the class  $\mathcal{P}_{\Sigma'}(\mu, \phi).$
Then
\begin{equation*}\label{eq7e}
|b_0|\leq |B_1|,
\end{equation*}
\begin{equation*}\label{eq8e}
|b_1| \leq \frac{1}{|1+\mu|}~\sqrt{|(B_1-B_2)^2|+|B^2_1|+2|B_1(B_1-B_2)|+|B^4_1|}.
\end{equation*}and
\begin{equation*}\label{eq8f}|b_2|\leq \frac{1}{|1+2\mu|}\left(|B_1|
+2|B_2-B_1|+|B_1-2B_2+B_3|+~|B_1|^3\right)\end{equation*}
where $\mu \geq 1, \,\,z,w \in \Delta^*.$
\end{theorem}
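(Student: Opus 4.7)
My plan is to obtain Theorem \ref{th6a} directly as the $(\lambda, \gamma) = (1, 1)$ specialization of Theorem \ref{th1}. This is legitimate because Example \ref{exam2} already records that the subordination conditions characterizing $\mathcal{P}_{\Sigma'}(\mu,\phi)$ are exactly \eqref{eq2}--\eqref{eq3} at $(\lambda, \gamma) = (1, 1)$, so any $g \in \mathcal{P}_{\Sigma'}(\mu,\phi)$ lies in $\mathcal{P}^1_{\Sigma'}(1,\mu,\phi)$. Consequently no new coefficient identities need to be re-derived; the estimates of Theorem \ref{th1} apply verbatim and need only be simplified.

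The first step is to compute the $\lambda = 1$ specializations of the three denominator expressions appearing in \eqref{eq7}, \eqref{b1e}, and \eqref{b2b}: one checks $|\mu - \mu\lambda - \lambda| = 1$, $|\mu - \lambda - 2\mu\lambda| = |1+\mu|$, and $|\mu - \lambda - 3\mu\lambda| = |1+2\mu|$. In parallel I would evaluate the two polynomial factors entering the higher-order bounds at $\lambda = 1$, namely $\mu(\mu-1)(1-\lambda) + 2\lambda = 2$ and $\mu(\mu-1)(\mu-2)(1-\lambda) - 6\lambda = -6$, whose moduli are $2$ and $6$ respectively.

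The second step is to set $\gamma = 1$ throughout, which erases every external factor of $|\gamma|$ and every internal factor of $|\gamma|^2$. Combining these substitutions with the three bounds in Theorem \ref{th1} yields the announced inequalities, provided one carries out the routine cancellations: inside the radical for $|b_1|$ the constant $4$ pulls out as $2$ and absorbs the prefactor $1/2$ appearing outside the square root, producing the claimed form; while the leading factor $1/2$ outside the parenthesis for $|b_2|$ halves the internal coefficients $2,4,2$ and combines with the contribution $|-6|/(3 \cdot 1^{3}) = 2$ to give exactly $|B_1| + 2|B_2 - B_1| + |B_1 - 2B_2 + B_3| + |B_1|^3$. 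The bound $|b_0| \leq |B_1|$ then drops out of \eqref{eq7} with no further work.

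Since this is a straightforward specialization, there is no substantive analytic obstacle; the only place requiring attention is the arithmetic bookkeeping of signs inside the absolute values and the matching of the outer $1/2$ prefactors against the inner constants $\sqrt{4}=2$ and $2,4,2$. Equivalently and more efficiently, I could apply the procedure to the already-simplified Theorem \ref{th6} by merely setting $\gamma = 1$, which bypasses the $\lambda$-reductions entirely and reduces the proof to two one-line substitutions.
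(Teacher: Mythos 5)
Your proposal is correct and follows exactly the route the paper intends: the paper offers no independent proof of Theorem~\ref{th6a}, but presents it as the $\lambda=1$, $\gamma=1$ specialization of Theorem~\ref{th1} (equivalently, $\gamma=1$ in Theorem~\ref{th6}), and your arithmetic checks --- $|\mu-\mu\lambda-\lambda|=1$, $|\mu-\lambda-2\mu\lambda|=|1+\mu|$, $|\mu-\lambda-3\mu\lambda|=|1+2\mu|$, with the factors $2$ and $|-6|/3$ cancelling the outer $1/2$ --- all match. Nothing further is needed.
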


%===============================================================================
\section{\bf Corollaries and concluding Remarks}
%=================================================================================
 
\begin{corollary}\label{th01}
Let $g$ be given by (\ref{mero1}) in the class  $\mathcal{P}^\gamma_{\Sigma'}( \lambda,\mu,\left( \frac{1 + z}{1 - z} \right)^{\alpha})\equiv\mathcal{P}^\gamma_{\Sigma'}( \lambda,\mu,\alpha).$
Then
\begin{equation}
|b_0| \leq \frac{2|\gamma| \alpha }{|\mu-\mu\lambda-\lambda|},
\end{equation}

\begin{equation}
|b_1| \leq ~\frac{2|\gamma|\alpha}{|\mu-\lambda-2\lambda\mu|} \sqrt{(\alpha-2)^2+\frac{|\mu(\mu-1)(1-\lambda)+2\lambda|^2|\gamma^2|}{|\mu-\mu\lambda-\lambda|^4}\alpha^2}
\end{equation}and
\begin{equation}
|b_2|\leq \frac{2|\gamma|\alpha}{|\mu-\lambda-3\lambda\mu|}\left(3-2\alpha+\left(\frac{4-6\alpha+2\alpha^2}{3}\right)
+\frac{2|\gamma|^2\alpha^2|\mu(\mu-1)(\mu-2)(1-\lambda)-6\lambda|}{3|\lambda|^3}\right)
\end{equation}
where $\gamma \in \mathbb{C}\backslash \{0\},0 < \lambda \leq 1, \mu \geq 1$ and $\,\,z,w \in \Delta^*.$
\end{corollary}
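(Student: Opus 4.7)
The plan is to specialize Theorem \ref{th1} to the choice $\phi(z) = \left(\frac{1+z}{1-z}\right)^{\alpha}$. For this $\phi$ the Taylor coefficients of $\phi$ in (\ref{c7e3}) have already been computed in (\ref{phi01}), namely
\[
B_1 = 2\alpha, \qquad B_2 = 2\alpha^2, \qquad B_3 = \frac{4\alpha^2+2\alpha}{3}.
\]
Since the class $\mathcal{P}^\gamma_{\Sigma'}(\lambda,\mu,\alpha)$ is defined to be $\mathcal{P}^\gamma_{\Sigma'}(\lambda,\mu,\phi)$ for precisely this $\phi$, each of the three estimates (\ref{eq7}), (\ref{b1e}), (\ref{b2b}) of Theorem \ref{th1} immediately yields a bound in terms of $\alpha$; the work left is purely algebraic simplification.

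For $|b_0|$ the substitution is transparent: (\ref{eq7}) gives $2|\gamma|\alpha/|\mu-\mu\lambda-\lambda|$ without further manipulation. For $|b_1|$ I would first evaluate the auxiliary quantities $B_1-B_2 = 2\alpha(1-\alpha)$, $B_1(B_1-B_2) = 4\alpha^2(1-\alpha)$, $B_1^2 = 4\alpha^2$, and $B_1^4 = 16\alpha^4$, and insert them into (\ref{b1e}). The identity
\[
4(1-\alpha)^2 + 4 + 8(1-\alpha) = 4(2-\alpha)^2
\]
causes the first three summands under the radical to collapse to $16\alpha^2(\alpha-2)^2$. Factoring $4\alpha$ out of the square root and combining with the existing prefactor $\tfrac{1}{2}|\gamma|/|\mu-\lambda-2\lambda\mu|$ produces the stated form.

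For $|b_2|$ the procedure is analogous. The relevant coefficient combinations become $|B_2-B_1| = 2\alpha(1-\alpha)$, $|B_1 - 2B_2 + B_3| = \frac{8\alpha(1-\alpha)}{3}$, and $|B_1|^3 = 8\alpha^3$. Plugging these into (\ref{b2b}) and then pulling a factor of $4\alpha$ out of the bracket (which, multiplied against the prefactor $|\gamma|/(2|\mu-\lambda-3\lambda\mu|)$, produces the leading constant $2|\gamma|\alpha/|\mu-\lambda-3\lambda\mu|$) gives the expression declared in the corollary.

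The only difficulty is clerical: the bounds (\ref{b1e}) and (\ref{b2b}) contain several absolute-value expressions whose signs must be tracked. Since $0<\alpha\le 1$ every $(1-\alpha)$-factor is nonnegative, so the absolute values simplify without case analysis, and the computation reduces to ordinary polynomial bookkeeping. No new analytic tool beyond Theorem \ref{th1} itself is required.
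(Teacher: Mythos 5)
Your overall strategy---reading off the corollary as the specialization of Theorem \ref{th1} to $\phi(z)=\left(\frac{1+z}{1-z}\right)^{\alpha}$ and simplifying---is exactly the route the paper intends, and your $|b_0|$ and $|b_1|$ computations are correct: the identity $(1-\alpha)^2+1+2(1-\alpha)=(2-\alpha)^2$ does collapse the first three terms under the radical of (\ref{b1e}) to $16\alpha^2(2-\alpha)^2$, and factoring out $4\alpha$ gives the stated bound. The gap is in the $|b_2|$ step. You take $B_3=\frac{4\alpha^2+2\alpha}{3}$ as printed in (\ref{phi01}), compute $|B_1-2B_2+B_3|=\frac{8\alpha(1-\alpha)}{3}$, and then assert that substitution into (\ref{b2b}) ``gives the expression declared in the corollary.'' It does not: with your value, after extracting the factor $4\alpha$ from the bracket, the middle term is $\frac{4-4\alpha}{3}$, whereas the corollary states $\frac{4-6\alpha+2\alpha^2}{3}$; these agree only at $\alpha=0$ or $\alpha=1$. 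So the final matching you claim is false as written, and the last inequality of the corollary is not actually derived by your computation.

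The source of the mismatch is a typo in (\ref{phi01}) itself: expanding $(1+z)^{\alpha}(1-z)^{-\alpha}$ to third order shows that the correct coefficient is $B_3=\frac{2\alpha+4\alpha^{3}}{3}$, not $\frac{2\alpha+4\alpha^{2}}{3}$. With the correct $B_3$ one finds $B_1-2B_2+B_3=\frac{4\alpha(1-\alpha)(2-\alpha)}{3}$, hence $2|B_1-2B_2+B_3|/(4\alpha)=\frac{4-6\alpha+2\alpha^2}{3}$, and the $|b_2|$ bound of the corollary then drops out exactly along the lines you describe. Your proof therefore needs this one correction---recompute $B_3$ rather than quoting (\ref{phi01})---to close the $|b_2|$ case; the $|b_0|$ and $|b_1|$ bounds are unaffected since they involve only $B_1$ and $B_2$.
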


%=======================================================================================================

\begin{corollary}\label{th31}
Let $g$ be given by (\ref{mero1}) in the class  $\mathcal{P}^\gamma_{\Sigma'}\left(\lambda,\mu,\frac{1 + (1 - 2 \beta) z}{1 - z}\right)\equiv\mathcal{P}^\gamma_{\Sigma'}( \lambda,\mu,\beta).$
Then
\begin{equation}
|b_0| \leq \frac{2|\gamma|(1-\beta)}{|\mu-\mu\lambda-\lambda|},
\end{equation}

\begin{equation}
|b_1| \leq ~\frac{2|\gamma|(1-\beta)}{|\mu-\lambda-2\lambda\mu|} \sqrt{1+\frac{|\mu(\mu-1)(1-\lambda)+2\lambda|^2|\gamma^2|}{|\mu-\mu\lambda-\lambda|^4}(1-\beta)^2}
\end{equation}and
\begin{equation}
|b_2|\leq\frac{2|\gamma|(1-\beta)}{|\mu-\lambda-3\lambda\mu|}\left(1+\frac{2|\gamma|^2(1-\beta)^2|\mu(\mu-1)(\mu-2)(1-\lambda)-6\lambda|}{3|\lambda|^3}
\right)
\end{equation}
where $\gamma \in \mathbb{C}\backslash \{0\},0 < \lambda \leq 1, \mu \geq 1$ and $\,\,z,w \in \Delta^*.$
\end{corollary}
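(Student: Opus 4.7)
The plan is to recognize Corollary \ref{th31} as a direct specialization of Theorem \ref{th1} to the particular superordinate function $\phi(z)=\frac{1+(1-2\beta)z}{1-z}$. Since Theorem \ref{th1} is stated for an arbitrary Ma--Minda-type $\phi$ with coefficient data $(B_1,B_2,B_3)$, the work amounts to substituting the correct values for $B_1,B_2,B_3$ into the three inequalities (\ref{eq7}), (\ref{b1e}), (\ref{b2b}) and simplifying.

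First, I would read off the relevant data from (\ref{phi02}): the expansion
\[
\phi(z)=\frac{1+(1-2\beta)z}{1-z}=1+2(1-\beta)z+2(1-\beta)z^{2}+2(1-\beta)z^{3}+\cdots
\]
gives $B_{1}=B_{2}=B_{3}=2(1-\beta)$. The bound on $|b_{0}|$ is then immediate: substituting $B_{1}=2(1-\beta)$ into (\ref{eq7}) yields $|b_{0}|\leq \tfrac{2|\gamma|(1-\beta)}{|\mu-\mu\lambda-\lambda|}$, which is the first inequality of the corollary.

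Next I would treat $|b_{1}|$ and $|b_{2}|$ together, exploiting the key simplifying identities
\[
B_{1}-B_{2}=0,\qquad B_{1}-2B_{2}+B_{3}=0,
\]
which hold because all three $B_i$ coincide. In (\ref{b1e}) the terms $|(B_{1}-B_{2})^{2}|$ and $|B_{1}(B_{1}-B_{2})|$ drop out, leaving only $4|B_{1}|^{2}=16(1-\beta)^{2}$ plus the fourth-power term; factoring $16(1-\beta)^{2}$ out of the square root (and combining with the leading factor $\tfrac{|\gamma|}{2|\mu-\lambda-2\mu\lambda|}$) produces precisely the claimed expression. Similarly, in (\ref{b2b}) the terms $4|B_{2}-B_{1}|$ and $2|B_{1}-2B_{2}+B_{3}|$ vanish, so only $2|B_{1}|=4(1-\beta)$ and the cubic term $\tfrac{|\mu(\mu-1)(\mu-2)(1-\lambda)-6\lambda||\gamma|^{2}|B_{1}|^{3}}{3|\lambda|^{3}}=\tfrac{8(1-\beta)^{3}|\mu(\mu-1)(\mu-2)(1-\lambda)-6\lambda||\gamma|^{2}}{3|\lambda|^{3}}$ survive. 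Pulling out the common factor $4(1-\beta)$ and the prefactor $\tfrac{|\gamma|}{2|\mu-\lambda-3\mu\lambda|}$ gives the stated bound.

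There is essentially no genuine obstacle: the entire argument is algebraic substitution once one observes the two cancellation identities above. The only mild bookkeeping hazard is keeping track of the factor $2$ between the leading $\tfrac{|\gamma|}{2|\cdots|}$ coefficients in Theorem \ref{th1} and the $\tfrac{2|\gamma|(1-\beta)}{|\cdots|}$ prefactors appearing in the corollary; this is resolved once one factors $16(1-\beta)^{2}$ (respectively $4(1-\beta)$) out of the bracketed expression. No further invocation of Lemma \ref{lem1.1} or of the subordination machinery is needed, since all of that work is already absorbed into Theorem \ref{th1}.
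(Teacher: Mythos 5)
Your proposal is correct and is exactly the route the paper intends: Corollary \ref{th31} is obtained by substituting $B_1=B_2=B_3=2(1-\beta)$ from \eqref{phi02} into Theorem \ref{th1}, with the cancellations $B_1-B_2=0$ and $B_1-2B_2+B_3=0$ and the factoring of $4(1-\beta)$ (resp.\ $16(1-\beta)^2$ inside the square root) giving the stated bounds. The arithmetic checks out in all three inequalities.
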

{\bf Concluding Remarks:} We remark that, when $\lambda =1 $ and $\mu=1,$ the class $\mathcal{P}^\gamma_{\Sigma'}(\lambda,\mu,\phi)\equiv \mathcal{S}^\gamma_{\Sigma'}(\phi),$ the subclass of meromorphic bi-starlike functions of complex order. By taking $\mu=1$ in the Theorem \ref{th6}, we can easily obtain the coefficient estimates $b_0, b_1$ and $b_2$ for $\mathcal{S}^\gamma_{\Sigma'}(\phi),$ which leads to the results discussed in Theorem 2.3 of \cite{gmstjcho16}.
Also, we can obtain the initial coefficient estimates for function $g$ given by (\ref{mero1})  in the subclass $\mathcal{S}^\gamma_{\Sigma'}(\phi)$ by taking $ \phi(z)$ given in \eqref{phi01} and \eqref{phi02} respectively. 
%============================================================================================================

\par {\bf Future Work:}
Let a function $g\in\Sigma'$ be given by (\ref{mero1}). By taking $\gamma=(1-\alpha) cos\beta~ e^{-i\beta},\,\,
|\beta|<\frac{\pi}{2},\,\, 0\leq\alpha < 1,$ the class $\mathcal{P}^\gamma_{\Sigma'}(\lambda,\mu,\phi)\equiv\mathcal{P}^\beta_{\Sigma'}(\alpha,\lambda,\mu,\phi)$ called the generalized class of $\beta$ bi-spiral like functions of order $\alpha (0\leq\alpha < 1)$ satisfying the following conditions.
\begin{equation*}\label{merospirala}
e^{i \beta}\left[(1-\lambda) \bigg(\frac{g(z)}{z}\bigg)^\mu + \lambda \left(  \frac{z (g'(z))^\mu}{g(z)} \right)
-1
 \right]\prec [\phi(z)(1-\alpha)+\alpha]cos~\beta +i sin \beta
\end{equation*}
and
\begin{equation*}\label{merospiral}
e^{i \beta}\left[  (1-\lambda) \bigg(\frac{h(w)}{w}\bigg)^\mu + \lambda \left(  \frac{w (h'(w))^\mu}{h(w)} \right)
-1 \right] \prec [\phi(w)(1-\alpha)+\alpha]cos~\beta +i sin \beta
\end{equation*}
where $ 0 < \lambda \leq 1 , ~\mu \geq 1, \,\,z,w \in \Delta^*$ and the function $h$ is given by (\ref{mero2}).
\par For function $g\in \mathcal{P}^\beta_{\Sigma'}(\alpha,\lambda,\mu,\phi)$  given by (\ref{mero1}), by choosing $\phi(z)=\big(\frac{1+z}{1-z}\big),({\rm or}\quad \phi(z)=\frac{1+Az}{1+Bz},-1 \leq B < A \leq 1),$ we can obtain the estimates $|b_0|,$  $|b_1|$ and $|b_2|$
  by  routine procedure (as in Theorem \ref{th1}) and so we omit the details.
%====================================================================================
%=============================================================================================================================

\end{document}